\documentclass[english]{amsart}

\usepackage{amsmath}

\usepackage{amssymb}

\usepackage{amscd} \usepackage{tabularx}
\usepackage{enumerate}
\usepackage[all,cmtip,line]{xy}

\newcommand{\F}{\mathbb{F}} 

\def\RCS$#1: #2 ${\expandafter\def\csname RCS#1\endcsname{#2}}
\RCS$Revision: 191 $
\RCS$Date: 2011-06-20 19:15:39 -0500 (Mon, 20 Jun 2011) $

\newcommand{\into}{\hookrightarrow}

\newcommand{\varepsilonbar}{\overline{\varepsilon{}}}

 \newcommand{\A}{\mathbb{A}}
\newcommand{\Q}{\mathbb{Q}}

\newcommand{\C}{\mathbb{C}}

\newcommand{\rhobar}{\overline{\rho}} 
 
\newcommand{\rbar}{\bar{r}}

\newcommand{\GL}{\operatorname{GL}}

\newcommand{\PGL}{\operatorname{PGL}}

 \newcommand{\Qbar}{\overline{\Q}}

 \newcommand{\Qp}{\Q_p}
 
\newcommand{\Qpbar}{\overline{\Q}_p}
\newcommand{\Qlbar}{\overline{\Q}_{l}}

\newcommand{\Fpbar}{\overline{\F}_p}

\newcommand{\Flbar}{\overline{\F}_l} \newcommand{\Fbar}{\overline{\F}}

\newcommand{\proj}{\operatorname{proj}}

\newcommand{\PSL}{\operatorname{PSL}}

\usepackage{amsthm}

 \newtheorem{ithm}{Theorem}
\newtheorem{thm}{Theorem}[subsection]

 \newtheorem{prop}[thm]{Proposition}
 \theoremstyle{definition}
 \theoremstyle{definition}
 \theoremstyle{remark}

\numberwithin{equation}{subsection}

\theoremstyle{definition}

\setcounter{tocdepth}{1}

\usepackage{hyperref}
\begin{document}
\title[Ordinary lifts of Hilbert
  modular forms, II]  {Congruences between Hilbert modular forms:
    constructing ordinary lifts, II}

\author{Thomas Barnet-Lamb}\email{tbl@brandeis.edu}\address{Department of Mathematics, Brandeis University}
\author{Toby Gee} \email{toby.gee@imperial.ac.uk} \address{Department of
  Mathematics, Imperial College London} \author{David Geraghty}
\email{geraghty@math.princeton.edu}\address{Princeton University and
  Institute for Advanced Study} 
\subjclass[2000]{11F33.}
\begin{abstract}In this note we improve on the results of our earlier paper
  \cite{blggord}, proving a near-optimal theorem on the existence of
  ordinary lifts of a mod $l$ Hilbert modular form for any odd prime
  $l$.
\end{abstract}
\maketitle
\tableofcontents
\section{Introduction.}\label{sec:intro}Let $F$ be a totally real
field with absolute Galois group $G_F$, and let $l$ be an odd prime
number. In our earlier paper \cite{blggord}, we proved a general
result on the existence of ordinary modular lifts of a given modular
representation $\rhobar:G_F\to\GL_2(\Flbar)$; we refer the reader to
the introduction of \emph{op. cit.} for a detailed discussion of the
problem of constructing such a lift, and of our techniques for doing
so.

The purpose of this paper is to improve on the hypotheses imposed on
$\rhobar$, removing some awkward assumptions on its image; in
particular, if $l=3$ then the results of \cite{blggord} were limited
to the case that $\rhobar$ was induced from a quadratic character,
whereas our main theorem is the following. 
\begin{ithm}
  \label{thm:main thm intro version}Suppose that $l>2$ is prime, that $F$ is a
  totally real field, and that $\rhobar:G_F\to\GL_2(\Flbar)$ is
  irreducible and modular. Assume that $\rhobar|_{G_{F_v}}$ is
  reducible at all places $v|l$ of $F$, and that
  $\rhobar|_{G_{F(\zeta_l)}}$ is irreducible. If $l=5$, assume further
  that the projective image of $\rhobar$ is not isomorphic to either
  $\PGL_2(\F_5)$ or $\PSL_2(\F_5)$.

  Then $\rhobar$ has a modular lift $\rho:G_F\to\GL_2(\Qbar_l)$ which
  is ordinary at all places $v|l$.
\end{ithm}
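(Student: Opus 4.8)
The plan is to follow the same strategy as in \cite{blggord}: produce the ordinary lift by first finding a suitable ``mod $l$ companion'' situation and then invoking an automorphy lifting theorem for ordinary deformations. The key new input that makes the improved hypotheses work is that one should no longer insist on directly lifting $\rhobar$ as a representation of $G_F$, but instead pass to a soluble totally real extension $F'/F$ where the local and global obstructions can be controlled, prove the result there, and then descend. Concretely, first I would replace $F$ by such an $F'$ (using soluble base change for Hilbert modular forms, which preserves modularity and the hypothesis that $\rhobar|_{G_{F'_v}}$ is reducible at places above $l$, and — crucially, using that $\rhobar|_{G_{F(\zeta_l)}}$ is irreducible — preserves the ``big image'' hypothesis needed for the lifting theorem) so that, in addition, $[F':\Q]$ is even, $\rhobar$ is unramified outside $l$, and at each place $v \mid l$ the restriction $\rhobar|_{G_{F'_v}}$ is trivial or very close to it, so that the relevant ordinary local deformation ring is well-behaved and nonempty.

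The second step is the local analysis at places $v \mid l$: one needs to know that the framed ordinary deformation ring parametrizing lifts which are ordinary (i.e. reducible with the ``correct'' Hodge--Tate weights on the sub and quotient) is nonzero and, after inverting $l$, has a $\Qbar_l$-point of the desired Hodge type. Because $\rhobar|_{G_{F'_v}}$ is reducible, such a local ordinary lift exists essentially by hand: lift each of the two characters on the diagonal to characters of the right Hodge--Tate weight (one a finite-order twist of a power of the cyclotomic character, the other unramified up to such a twist), and either lift the extension class or, if necessary, allow it to be split. This gives the local conditions at $l$. At the finitely many other ramified places (after base change, there may still be places where we want to impose a minimal or Taylor--Wiles-type condition) one uses standard local deformation rings as in \cite{blggord}.

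The third step is the global patching/automorphy-lifting argument. Having specified local deformation conditions — ordinary at $l$, and appropriate conditions elsewhere — one applies a Taylor--Wiles--Kisin patching argument (in the ordinary setting, as developed by Geraghty and used in \cite{blggord}) to an appropriate unitary group over $F'$ or its CM quadratic extension, to conclude that the corresponding global ordinary deformation ring acts faithfully (or nearly so) on a patched module of automorphic forms; since that module is nonzero (it sees $\rhobar$, which is modular) and the ordinary local deformation rings at $l$ have $\Qbar_l$-points, one extracts a characteristic-zero ordinary modular lift $\rho':G_{F'}\to\GL_2(\Qbar_l)$. The hypothesis on the projective image when $l=5$ is exactly what is needed to guarantee the ``adequacy''/big image condition for the Taylor--Wiles primes in that small residue characteristic. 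Finally, one descends from $F'$ back to $F$: because $F'/F$ is soluble and $\rho'$ is automorphic, solvable descent for $\GL_2$ (Arthur--Clozel) produces a modular lift $\rho$ of $\rhobar$ over $F$, which is ordinary at all $v \mid l$ because ordinarity is a local condition preserved under base change and descent along extensions in which the places above $l$ behave well.

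The main obstacle I expect is the first step done carefully: arranging a single soluble base change $F'/F$ that simultaneously (i) makes the image condition usable for patching in all residue characteristics $l > 2$ (this is where the old argument needed $l$ not too small and is why $l = 3,5$ were problematic), (ii) keeps $\rhobar|_{G_{F'(\zeta_l)}}$ irreducible so the Taylor--Wiles method applies, and (iii) puts the local situation at $l$ into a shape where the ordinary deformation ring is provably nonzero with a point of the right weight. Balancing these — in particular checking that irreducibility of $\rhobar|_{G_{F(\zeta_l)}}$ really does survive and yields adequacy after base change, with the stated exception for $\PGL_2(\F_5)$, $\PSL_2(\F_5)$ — is the technical heart of the improvement over \cite{blggord}.
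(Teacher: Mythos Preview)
Your outline is essentially the adequate-image argument of the paper (Theorem~2.1.2), but it has a genuine gap: it does not cover all cases of the theorem. The patching step requires that $\rhobar(G_{F(\zeta_l)})$ be adequate, and you assert that the hypothesis excluding $\PGL_2(\F_5)$ and $\PSL_2(\F_5)$ when $l=5$ is exactly what guarantees this. That is not correct. By the classification (Proposition~2.1.1), adequacy also fails when $l=3$ and the projective image of $\rhobar(G_{F(\zeta_3)})$ is conjugate to $\PSL_2(\F_3)$, and the theorem imposes \emph{no} hypothesis ruling this out. No amount of solvable base change will make $\rhobar(G_{F'(\zeta_3)})$ adequate in this situation (the image does not change, since $F'$ is chosen linearly disjoint from $\overline{F}^{\ker\rhobar}$), so your Taylor--Wiles--Kisin patching argument simply does not run there. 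The paper treats this case by an entirely different route: since the projective image is then solvable, the Langlands--Tunnell theorem supplies a parallel-weight-one modular lift, which is automatically ordinary once one has base-changed to make $\rhobar$ unramified above $3$; Hida theory then moves to weight $0$, and one descends. You need to add this argument.

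A second, smaller issue is your descent step. You say that once you have an ordinary automorphic lift $\rho'$ over $F'$, ``solvable descent for $\GL_2$ (Arthur--Clozel) produces a modular lift $\rho$ of $\rhobar$ over $F$''. As stated this does not work: $\rho'$ need not extend to $G_F$, and the corresponding automorphic representation over $F'$ need not be $\Gal(F'/F)$-invariant, so Arthur--Clozel descent does not apply directly. The paper (following \cite{gee061} and \cite{blggord}) instead uses the Khare--Wintenberger method: one sets up a global deformation problem over $F$ with ordinary conditions at $l$, uses the existence of $\rho'$ over the solvable extension to prove the relevant deformation ring over $F$ is finite over $\calO$ and hence has a $\Qbar_l$-point, and then proves that point is automorphic via an automorphy lifting theorem (whose hypotheses are verified using $\rho'$). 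Your sketch should be corrected to reflect this.
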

(Note that the assumption that $\rhobar|_{G_{F_v}}$ is reducible at
all places $v|l$ of $F$ is necessary.) Our methods are based on those
of \cite{blggord}. The reason that we are now able to prove a stronger
result is that the automorphy lifting results that we employed in
\cite{blggord} have since been optimised in \cite{BLGGT} and
\cite{jack}; in particular, we make extensive use of the results of
the appendix to \cite{blgg11-serre-weights-for-U2}, which improves on
a lifting result of \cite{BLGGT}, and classifies the subgroups of
$\GL_2(\Flbar)$ which are adequate in the sense of \cite{jack}. In
Section \ref{sec:adequate} we use these results to prove Theorem
\ref{thm:main thm intro version}, except in the case that $l=3$ and
the projective image of $\rhobar(G_{F(\zeta_3)})$ is isomorphic to
$\PSL_2(\F_3)$. In this case the adequacy hypothesis we require fails,
but in Section \ref{sec:non-adequate} we handle this case by making
use of the Langlands--Tunnell theorem.

We do not know how to remove the extra hypothesis when $l=5$; since
our techniques rely on automorphy lifting theorems, it seems likely
that at least some additional assumption is needed in these cases,
cf. hypothesis (3.2.3)(3) of \cite{kis04}.

\subsection{Notation}If $M$ is a field, we let $G_M$ denote its
absolute Galois group. We write $\bar{\varepsilon}$ for the mod $l$
cyclotomic character. We fix an algebraic closure $\Qbar$ of $\Q$, and
regard all algebraic extensions of $\Q$ as subfields of $\Qbar$. For
each prime $p$ we fix an algebraic closure $\Qpbar$ of $\Qp$, and we
fix an embedding $\Qbar\into\Qpbar$. In this way, if $v$ is a finite
place of a number field $F$, we have a homomorphism $G_{F_v}\into
G_F$. We also fix an embedding $\Qbar\into\C$.

We normalise the definition of Hodge--Tate weights so that all the
Hodge--Tate weights of the $l$-adic cyclotomic character $\varepsilon$ are $-1$. We
refer to a two-dimensional potentially crystalline representation with
all pairs of labelled Hodge--Tate weights equal to $\{0,1\}$ as a
weight $0$ representation.

If $F$ is a totally real field, then a continuous representation
$\rbar : G_{F} \to \GL_2(\Flbar)$ is said to be \emph{modular} if
there exists a regular algebraic automorphic representation $\pi$ of
$\GL_2(\A_{F})$ such that $\rbar_{l}(\pi)\cong \rbar$, where
$r_l(\pi)$ is the $l$-adic Galois representation associated to $\pi$.

\section{The adequate case}\label{sec:adequate}\subsection{} The notion of an \emph{adequate} subgroup of $\GL_n(\Flbar)$ is
defined in \cite{jack}. We will not need to make use of the actual
definition; instead, we will use the following classification
result. Note that by definition an adequate subgroup of
$\GL_n(\Flbar)$ necessarily acts irreducibly on $\Flbar^n$.

\begin{prop} 
  \label{prop:adequacy for n=2} Suppose that $l>2$ is a prime, and
  that $G$  is a finite subgroup of $\GL_2(\Fbar_l)$ which acts
  irreducibly on $\Fbar_l^2$.  Then precisely one of the following is
  true:
\begin{itemize}
\item We have $l=3$, and the image of $G$ in $\PGL_2(\Fbar_3)$ is  conjugate to $\PSL_2(\F_3)$.
\item We have $l=5$, and the image of $G$ in $\PGL_2(\Fbar_5)$ is  conjugate to
 $\PGL_2(\F_5)$ or $\PSL_2(\F_5)$.
\item $G$ is adequate.
\end{itemize}
\end{prop}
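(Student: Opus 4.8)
The plan is to combine two ingredients: Dickson's classification of the finite subgroups of $\PGL_2(\Flbar)$, and the classification of the \emph{adequate} subgroups of $\GL_2(\Flbar)$ carried out in the appendix to \cite{blgg11-serre-weights-for-U2}. The latter does all of the substantial work; what remains is to match up the two lists, to eliminate the reducible possibilities, and to check that the three alternatives in the statement are mutually exclusive.

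First I would recall Dickson's theorem: every finite subgroup of $\PGL_2(\Flbar)$ is conjugate to one of (a) a cyclic group; (b) a group $E\rtimes C$ with $E$ a nontrivial elementary abelian $l$-group and $C$ cyclic of order prime to $l$; (c) a dihedral group $D_{2n}$ with $l\nmid n$; (d) one of $A_4$, $S_4$, $A_5$; or (e) $\PSL_2(\F_{l^r})$ or $\PGL_2(\F_{l^r})$ for some $r\geq 1$. Next I would use the hypothesis that $G$ acts irreducibly on $\Flbar^2$ to rule out (a) and (b). If the image of $G$ in $\PGL_2(\Flbar)$ were cyclic, then $G$ would be abelian --- a central extension of a cyclic group is abelian --- and so could not act irreducibly on a two-dimensional space. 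If that image were of the form $E\rtimes C$ with $E\neq 1$, then the normal $l$-subgroup $E$, being conjugate into the upper-triangular unipotent matrices, would have a (unique) fixed point on $\mathbb{P}^1$, which the group $E\rtimes C$ would then also fix, so $G$ would stabilise a line in $\Flbar^2$. In either case this contradicts irreducibility. Hence the image of $G$ in $\PGL_2(\Flbar)$ is conjugate to a dihedral group $D_{2n}$ with $l\nmid n$ (in particular, of order prime to $l$), to $A_4$, $S_4$ or $A_5$, or to $\PSL_2(\F_{l^r})$ or $\PGL_2(\F_{l^r})$ for some $r\geq 1$; here one should keep in mind the exceptional isomorphisms $\PSL_2(\F_3)\cong A_4$ and $\PSL_2(\F_5)\cong A_5$, so that the sporadic entries of (d) coincide with small members of family (e) when $l\in\{3,5\}$.

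Now I would invoke the classification of adequate subgroups of $\GL_2(\Flbar)$ from the appendix to \cite{blgg11-serre-weights-for-U2}. Combined with the list just obtained, it shows that an irreducible finite subgroup $G$ of $\GL_2(\Flbar)$ with $l>2$ is adequate \emph{unless} either $l=3$ and the image of $G$ in $\PGL_2(\Flbar)$ is conjugate to $\PSL_2(\F_3)$, or $l=5$ and that image is conjugate to $\PGL_2(\F_5)$ or to $\PSL_2(\F_5)$; and in these exceptional cases $G$ is not adequate. (For $l=3$ and projective image $\PSL_2(\F_3)\cong A_4$ the failure is transparent: the surjection $A_4\onto\Z/3$ equips $G$ with a nontrivial quotient of $l$-power order. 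The failure for $l=5$ is the phenomenon behind hypothesis (3.2.3)(3) of \cite{kis04}.) This gives exactly the trichotomy in the statement.

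Finally, for the word \emph{precisely} I would note that the three alternatives are pairwise incompatible: the first two are incompatible with the third because the groups arising in them are not adequate; the first two are incompatible with each other since they force $l=3$, respectively $l=5$; and within the second, $\PGL_2(\F_5)$ and $\PSL_2(\F_5)$ cannot both occur because they are not conjugate in $\PGL_2(\Flbar)$, having orders $120$ and $60$. The only step that requires genuine work is the adequacy classification itself --- the vanishing of $H^1(G,\ad^0\Flbar^2)$ and the requirement that $\ad\Flbar^2$ be spanned over $\Flbar$ by the semisimple elements of $G$, for the handful of small groups that are not automatically adequate --- but that is precisely the input we are taking from the appendix to \cite{blgg11-serre-weights-for-U2}, so here it is enough to organise the case analysis around Dickson's list.
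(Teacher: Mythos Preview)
Your proposal is correct and, at bottom, takes the same route as the paper: the paper's proof is the single line ``This is Proposition A.2.1 of \cite{blgg11-serre-weights-for-U2},'' and you likewise defer the substantive adequacy analysis to that same appendix. The difference is purely expository --- you unpack what lies behind the citation by recalling Dickson's list, eliminating the reducible projective images, and spelling out why the three alternatives are mutually exclusive --- whereas the paper is content to cite the result wholesale. Your added detail is sound (in particular the arguments ruling out cyclic and $E\rtimes C$ projective image via irreducibility are correct), so nothing is missing; it simply reproduces part of the content of the cited proposition rather than invoking it as a black box.
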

\begin{proof}
  This is Proposition A.2.1 of \cite{blgg11-serre-weights-for-U2}.
\end{proof}
In the case that $\rhobar(G_{F(\zeta_l)})$ is adequate, our main
result follows exactly as in section 6 of \cite{blggord}, using the
results of Appendix A of \cite{blgg11-serre-weights-for-U2} (which in
turn build on the results of \cite{BLGGT}). We obtain the following
theorem.
\begin{thm}
  \label{main thm in adequate case}Suppose that $l>2$ is prime, that
  $F$ is a totally real field, and that $\rhobar:G_F\to\GL_2(\Flbar)$
  is irreducible and modular. Suppose also that
  $\rhobar(G_{F(\zeta_l)})$ is adequate. Then:
  \begin{enumerate}
  \item There is a finite solvable extension of totally real fields
    $L/F$ which is linearly disjoint from $\overline{F}^{\ker\rhobar}$
    over $F$, such that $\rhobar|_{G_L}$ has a modular lift
    $\rho_L:G_L\to\GL_2(\Qlbar)$ of weight $0$ which is ordinary at all places $v|l$.
  \item If furthermore $\rhobar|_{G_{F_v}}$ is reducible at all places
    $v|l$, then $\rhobar$ itself has a modular lift
    $\rho:G_F\to\GL_2(\Qlbar)$ of weight $0$ which is ordinary at all places $v|l$.
  \end{enumerate}

\end{thm}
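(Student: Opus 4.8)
The proof runs exactly as \cite[\S 6]{blggord}, the only change being that the automorphy lifting theorems of \cite{BLGGT} invoked there are replaced throughout by their sharpened forms from the appendix to \cite{blgg11-serre-weights-for-U2}, whose adequacy input is supplied by Proposition \ref{prop:adequacy for n=2}. The common mechanism is the following. Let $K$ be a totally real field with $\rhobar|_{G_K}$ modular, $\rhobar(G_{K(\zeta_l)})$ adequate, and $\rhobar|_{G_{K_w}}$ reducible at every place $w|l$. Then the ordinary, weight-$0$ local deformation ring at each such $w$ is nonzero (this is where reducibility enters), and one runs the Ramakrishna--Khare--Wintenberger--B\"ockle lifting argument---using adequacy of $\rhobar(G_{K(\zeta_l)})$ to annihilate the relevant dual Selmer group after adjoining a finite set of auxiliary (``Ramakrishna'') primes, and allowing, if the argument demands it, a further solvable totally real base change linearly disjoint from $\overline{F}^{\ker\rhobar}$ over $F$---to produce a continuous lift $\rho_K:G_K\to\GL_2(\Qlbar)$ of $\rhobar|_{G_K}$ of prescribed determinant, de Rham of weight $0$ and ordinary at all $w|l$. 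Since $\rhobar|_{G_K}$ is modular with adequate image over $K(\zeta_l)$, the ordinary automorphy lifting theorem of \cite[Appendix~A]{blgg11-serre-weights-for-U2} then shows $\rho_K$ is modular.

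\emph{Part (1).} Choose a finite solvable totally real extension $L/F$, linearly disjoint from $\overline{F}^{\ker\rhobar}(\zeta_l)$ over $F$, such that $\rhobar|_{G_{L_w}}$ is reducible at every $w|l$. Such an $L$ exists because, for $v|l$, an irreducible $\rhobar|_{G_{F_v}}$ automatically has metabelian image: its wild inertia is a pro-$l$ group, so cannot act irreducibly on $\Flbar^2$, and its nonzero invariants, being stable under $G_{F_v}$, must be all of $\Flbar^2$; thus $\rhobar|_{G_{F_v}}$ factors through the metabelian tame quotient and, by Clifford theory, is induced from the unramified quadratic extension of $F_v$, hence becomes reducible after an unramified quadratic base change. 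Linear disjointness gives $\rhobar(G_{L(\zeta_l)})=\rhobar(G_{F(\zeta_l)})$, so adequacy persists over $L$, and $\rhobar|_{G_L}$ is modular by solvable base change for $\GL_2$. The mechanism above (enlarging $L$ if the lifting argument demands a further solvable base change) now produces a modular ordinary weight-$0$ lift $\rho_L$ of $\rhobar|_{G_L}$; in particular $\rhobar|_{G_L}$ is \emph{ordinarily} modular.

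\emph{Part (2).} Assume now that $\rhobar|_{G_{F_v}}$ is reducible at all $v|l$, so that the mechanism above applies directly with $K=F$---no base change being needed, the reducibility hypothesis being in force over $F$---and yields an ordinary weight-$0$ lift $\rho:G_F\to\GL_2(\Qlbar)$ of $\rhobar$ itself. It remains to see that $\rho$ is modular. Restricting to the field $L$ of Part~(1), $\rho|_{G_L}$ is an ordinary weight-$0$ lift of $\rhobar|_{G_L}$, and, since $\rhobar|_{G_L}$ is (ordinarily) modular with adequate image over $L(\zeta_l)$, the automorphy lifting theorem of \cite[Appendix~A]{blgg11-serre-weights-for-U2} shows $\rho|_{G_L}$ is modular. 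Finally, as $\rho$ is a representation of $G_F$ whose restriction to $G_L$ is modular, solvable base change descent for $\GL_2$ (Langlands, Arthur--Clozel) shows $\rho$ is modular, which is assertion (2).

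\emph{Main obstacle.} The delicate point throughout is the lifting step: one must check that the global ordinary weight-$0$ deformation problem, with the chosen local conditions away from $l$ and the auxiliary primes, can be made unobstructed---that is, that the associated dual Selmer group is killed upon adjoining suitable Ramakrishna primes. This is precisely what adequacy of $\rhobar(G_{K(\zeta_l)})$ provides (the needed Galois-cohomological vanishing together with the Chebotarev-type supply of primes), and it is exactly why the argument does not reach the non-adequate cases of Proposition \ref{prop:adequacy for n=2}---the image $\PSL_2(\F_3)$, which is treated separately in Section \ref{sec:non-adequate} by the Langlands--Tunnell theorem, and, when $l=5$, the images $\PGL_2(\F_5)$ and $\PSL_2(\F_5)$, which remain genuinely excluded.
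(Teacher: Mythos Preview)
Your outline has the right global architecture---prove (1) after a solvable base change, then deduce (2) by Khare--Wintenberger plus solvable descent---but the mechanism you describe for (1) is not the one the paper uses, and your version has a gap.

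The paper does \emph{not} build the ordinary lift by a Ramakrishna-type argument (auxiliary primes to annihilate a dual Selmer group). Instead it first arranges, via Lemma~6.1.1 of \cite{blggord}, that $\rhobar|_{G_{L_w}}$ and $\varepsilonbar|_{G_{L_w}}$ are \emph{trivial} (not merely reducible) for all $w\mid l$, and that $\rhobar|_{G_L}$ already has a weight-$0$ potentially crystalline modular lift $\rho'$. By \cite[Lemma~4.4.1]{geekisin} the local restrictions $\rho'|_{G_{L_w}}$ are potentially diagonalizable. One then passes to a CM quadratic extension $M/L$ in which every place above $l$ splits and applies Theorem~A.4.1 of \cite{blgg11-serre-weights-for-U2}: this is a ``change the local lift'' result in the potentially diagonalizable framework of \cite{BLGGT}, taking the potentially diagonalizable automorphic input and returning an \emph{ordinary} automorphic lift over $M$. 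Descent from $M$ to $L$ is then by the Khare--Wintenberger comparison of deformation rings, exactly as in the last paragraph of \cite[Proposition~6.1.3]{blggord}. Adequacy is used as the hypothesis for Theorem~A.4.1 (Taylor--Wiles patching), not as a device to kill dual Selmer groups; and Proposition~\ref{prop:adequacy for n=2} is not invoked here at all---adequacy is simply assumed.

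Two concrete problems with your route. First, making $\rhobar|_{G_{L_w}}$ reducible is not enough to ensure the ordinary weight-$0$ local deformation ring is nonempty: an ordinary weight-$0$ lift is an extension of an unramified character by an unramified twist of $\varepsilon$, so the two residual characters must have a specific shape, which is why the paper trivialises both $\rhobar$ and $\varepsilonbar$ locally. Second, you conflate the Ramakrishna method (auxiliary primes chosen to kill dual Selmer, producing a lift by pure Galois cohomology) with the Khare--Wintenberger method (lower bound on $\dim R$ from local calculations, finiteness of $R$ from an automorphy lifting theorem); \cite[\S 3]{gee061} and \cite[\S 6]{blggord}, to which the paper defers, use the latter, and adequacy feeds into the patching step rather than into any Selmer computation.
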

\begin{proof}Firstly, note that (2) is easily deduced from (1) using
  the results of Section 3 of \cite{gee061} (which build on Kisin's
  reinterpretation of the Khare--Wintenberger method). Indeed, the
  proofs of Theorems 6.1.5 and 6.1.7 of \cite{blggord} go through
  unchanged in this case.

  Similarly, (1) is easily proved in the same way as Proposition 6.1.3
  of \cite{blggord} (and in fact the proof is much shorter). Firstly,
  note that the proof of Lemma 6.1.1 of \cite{blggord} goes through
  unchanged to show that there is a finite solvable extension of
  totally real fields $L/F$ which is linearly disjoint from
  $\overline{F}^{\ker\rhobar}$ over $F$, such that $\rhobar|_{G_L}$
  has a modular lift $\rho':G_L\to\GL_2(\Qlbar)$ of weight $0$ which
  is potentially crystalline at all places dividing $l$, and in addition both
  $\rhobar|_{G_{L_w}}$ and $\varepsilonbar|_{G_{L_w}}$ are trivial for
  each place $w|l$ (and in particular, $\rhobar|_{G_{L_w}}$ admits an
  ordinary lift of weight $0$), and $\rhobar$ is unramified at all
  finite places. By Lemma 4.4.1 of \cite{geekisin}, $\rho'|_{G_{L_w}}$ is
  potentially diagonalizable in the sense of \cite{BLGGT} for all
  places $w|l$ of $L$.

  Choose a CM quadratic extension $M/L$ which is linearly disjoint
  from $L(\zeta_l)$ over $L$, in which all places of $L$ dividing $l$
  split. We can now apply Theorem A.4.1 of
  \cite{blgg11-serre-weights-for-U2} (with $F'=F=M$, $S$ the set of
  places of $L$ dividing $l$, and $\rho_v$ an ordinary lift of
  $\rhobar|_{G_{L_w}}$ for each $w|l$) to see that $\rhobar|_{G_M}$
  has an ordinary automorphic lift $\rho_M:G_M\to\GL_2(\Qlbar)$ of
  weight $0$.

The argument of the last paragraph of the proof of Proposition 6.1.3
of \cite{blggord} (which uses the Khare--Wintenberger method to
compare deformation rings for $\rhobar|_{G_L}$ and $\rhobar|_{G_M}$)
now goes over unchanged to complete the proof.  \end{proof}

\section{An inadequate
  case}\label{sec:non-adequate}\subsection{}We now consider the case
that $l=3$ and $\rhobar|_{G_{F(\zeta_3)}}$ is irreducible, but
$\rhobar(G_{F(\zeta_3)})$ is not adequate. By Proposition
\ref{prop:adequacy for n=2}, this means that the projective image of
$\rhobar(G_{F(\zeta_3)})$ is isomorphic to $\PSL_2(\F_3)$, and is in
particular solvable. We now use the Langlands--Tunnell theorem to
prove our main theorem in this case.

\begin{thm}
  \label{thm:main thm in PSL2F3 case}Suppose that $F$ is a totally
  real field, and that $\rhobar:G_F\to\GL_2(\Fbar_3)$ is irreducible
  and modular. Assume that $\rhobar|_{G_{F_v}}$ is reducible at all
  places $v|3$ of $F$, and that the projective image of
  $\rhobar(G_{F(\zeta_3)})$ is isomorphic to $\PSL_2(\F_3)$. 

  Then $\rhobar$ has a modular lift $\rho:G_F\to\GL_2(\Qbar_3)$ which
  is ordinary at all places $v|3$.
\end{thm}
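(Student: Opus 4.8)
The plan is to exploit the solvability of the projective image via the Langlands--Tunnell theorem, reducing the problem to one that can be attacked with the techniques of \cite{blggord} and Theorem~\ref{main thm in adequate case} after a solvable base change. First I would use Langlands--Tunnell: since the projective image of $\rhobar(G_{F(\zeta_3)})$ is $\PSL_2(\F_3)$, the projective image of $\rhobar(G_F)$ is one of $\PSL_2(\F_3)\cong A_4$ or $\PGL_2(\F_3)\cong S_4$, so in particular $\rhobar$ has solvable projective image; lifting $\rhobar$ to a continuous $\C$-valued representation in the usual way and applying Langlands--Tunnell produces a weight-one automorphic form whose mod $3$ reduction recovers $\rhobar$. (Alternatively, one simply uses the fact, already implicit in the hypothesis that $\rhobar$ is modular, that $\rhobar$ comes from a Hilbert modular form, and notes that solvability of the image lets us twist and base-change freely.) The point of invoking solvability is that it removes the adequacy obstruction by allowing us to pass to a setting where $\rhobar$ becomes \emph{induced}.

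Next I would perform a carefully chosen solvable base change $L/F$ of totally real fields, linearly disjoint from $\overline{F}^{\ker\rhobar}$ over $F$, arranged so that $\rhobar|_{G_L}$ becomes dihedral, i.e. induced from a character $\psibar$ of $G_K$ for a CM quadratic extension $K/L$ in which all places above $3$ split, and so that locally at each place $w|3$ the restriction $\rhobar|_{G_{L_w}}$ is (up to twist) trivial or at worst very mild --- exactly as in the construction of Lemma~6.1.1 of \cite{blggord}. Here the solvable image is what makes such an $L$ exist: the projective image $A_4$ or $S_4$ has a normal subgroup with quotient of order dividing $3$ whose fixed field is where $\rhobar$ becomes dihedral. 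Over $L$, the construction of an ordinary automorphic lift of $\rhobar|_{G_L}$ of weight $0$ then proceeds as in \cite{blggord}: one builds a characteristic-zero lift $\psi$ of $\psibar$ which is ordinary (equivalently, of the right weight at the split places above $3$), induces it to get a weight-$0$ ordinary automorphic $\rho_L : G_L \to \GL_2(\Qbar_3)$ lifting $\rhobar|_{G_L}$, the automorphy being automatic since induced-from-CM representations are automorphic by automorphic induction.

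Finally I would descend from $L$ back to $F$ using the Khare--Wintenberger method exactly as in the proof of Theorems~6.1.5 and 6.1.7 of \cite{blggord} and Section~3 of \cite{gee061}: the hypothesis that $\rhobar|_{G_{F_v}}$ is reducible at all $v|3$ guarantees the existence of local ordinary weight-$0$ lifting rings at those places, and one plays off the global ordinary deformation ring for $\rhobar$ against that for $\rhobar|_{G_L}$ (whose modularity we have just established) to produce the desired modular ordinary lift $\rho : G_F \to \GL_2(\Qbar_3)$. The main obstacle --- and the reason this case must be treated separately --- is precisely that the relevant automorphy lifting theorems (e.g. those of \cite{BLGGT}, \cite{jack}) require an adequacy or big-image hypothesis that fails here; the workaround is to never apply an automorphy lifting theorem across a boundary where adequacy is needed, but instead to obtain modularity over $L$ by automorphic induction (which needs no such hypothesis) and to descend only via the Khare--Wintenberger/Kisin patching argument, which is insensitive to adequacy. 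Care is needed to ensure all the auxiliary fields ($L$, $K$, and any CM field used in the descent) are simultaneously linearly disjoint from the relevant splitting fields over the appropriate bases, and that the characters $\psibar$, $\psi$ have the correct local behaviour at places above $3$ and $\infty$; these are the routine-but-delicate points, handled as in \cite{blggord}.
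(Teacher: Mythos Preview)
Your proposal contains a genuine contradiction. You ask for a solvable totally real extension $L/F$ that is linearly disjoint from $\overline{F}^{\ker\rhobar}$ over $F$ \emph{and} such that $\rhobar|_{G_L}$ becomes dihedral. But linear disjointness forces $\rhobar(G_L)=\rhobar(G_F)$, so the projective image of $\rhobar|_{G_L}$ is still $A_4$ or $S_4$, never contained in a Klein four-group; you cannot shrink the image while staying disjoint from the splitting field. If instead you drop linear disjointness and pass to the subfield $L'\subset\overline{F}^{\ker\rhobar}$ where the projective image becomes $V_4$, two further problems arise: the descent arguments of Section~3 of \cite{gee061} that you invoke at the end are formulated for $L/F$ linearly disjoint from the splitting field; and the quadratic extension $K/L'$ from which $\rhobar|_{G_{L'}}$ is induced need not be CM, since the complex conjugations of $L'$ are $G_F$-conjugate but not necessarily $G_{L'}$-conjugate, and $A_4$ permutes the three involutions of $V_4$ transitively, so they may hit all three index-two subgroups. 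The automorphic-induction step therefore does not go through as written.

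The paper's argument is both different and shorter. It uses Langlands--Tunnell not merely as motivation but as the entire construction over $L$: after a solvable totally real base change $L/F$ (linearly disjoint from the splitting field) chosen only to make $\rhobar$ \emph{unramified} at all $w\mid 3$, the Langlands--Tunnell theorem supplies a modular lift $\rho$ of $\rhobar|_{G_L}$ of parallel weight one with $\rho(G_L)\isoto\rhobar(G_L)$. The key observation, which your proposal does not make, is that this finite-image lift is then itself unramified at every $w\mid 3$, hence ordinary. Hida theory upgrades this to an ordinary weight-$0$ lift, and the descent to $F$ proceeds exactly as in your final paragraph. No passage to a dihedral locus, and no automorphic induction, is needed.
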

\begin{proof}Firstly, note that since the projective image of
  $\rhobar(G_{F(\zeta_3)})$ is isomorphic to $\PSL_2(\F_3)$, the
  projective image of $\rhobar$ itself is isomorphic to $\PSL_2(\F_3)$
  or $\PGL_2(\F_3)$ (see, for example, Theorem 2.47(b) of
  \cite{MR1605752}).

 Choose a finite solvable extension of totally real fields $L/F$
 which is linearly disjoint from  $\overline{F}^{\ker\rhobar}$ over
 $F$, with the further property that $\rhobar|_{G_{L_w}}$ is
 unramified for each place $w|l$ of $L$. Exactly as in the proof of
 Theorem \ref{main thm in adequate case}, by the results of  Section 3
 of \cite{gee061} it suffices to show that $\rhobar|_{G_L}$ has a
 modular lift of weight $0$ which is potentially crystalline at each
 place $w|l$. By Hida theory, it in fact suffices to find some ordinary
 modular lift of $\rhobar|_{G_L}$ (not necessarily of weight $0$).

 Since the projective image of $\rhobar$ is isomorphic to
 $\PSL_2(\F_3)$ or $\PGL_2(\F_3)$, the image of $\rhobar$ is contained
 in $\Fpbar^\times\GL_2(\F_3)$. Then the Langlands--Tunnell theorem
 implies that $\rhobar|_{G_L}$ has a modular lift $\rho$ corresponding to a Hilbert
 modular form of parallel weight one. This follows from the discussion after Theorem
 5.1 of \cite{wiles-fermat} which also shows that the natural map
 $\rho(G_L)\to\rhobar(G_L)$ may be assumed to be an isomorphism. Since
 $\rhobar|_{G_{L_w}}$ is unramified at each place $w|l$ of $L$, this
 implies that $\rho$ is ordinary, as required. \end{proof} Finally, we deduce
  our main result from Theorems \ref{main thm in adequate case} and
  \ref{thm:main thm in PSL2F3 case}.
\begin{thm}
  \label{thm:main thm}Suppose that $l>2$ is prime, that $F$ is a
  totally real field, and that $\rhobar:G_F\to\GL_2(\Flbar)$ is
  irreducible and modular. Assume that $\rhobar|_{G_{F_v}}$ is
  reducible at all places $v|l$ of $F$, and that
  $\rhobar|_{G_{F(\zeta_l)}}$ is irreducible. If $l=5$, assume further
  that the projective image of $\rhobar$ is not isomorphic to either
  $\PGL_2(\F_5)$ or $\PSL_2(\F_5)$.

  Then $\rhobar$ has a modular lift $\rho:G_F\to\GL_2(\Qbar_l)$ which
  is ordinary at all places $v|l$.
\end{thm}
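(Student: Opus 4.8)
The plan is to reduce the final Theorem \ref{thm:main thm} to the two theorems that have already been proved in the paper — Theorem \ref{main thm in adequate case} and Theorem \ref{thm:main thm in PSL2F3 case} — by checking that the hypotheses of exactly one of them is satisfied under the hypotheses of Theorem \ref{thm:main thm}. First I would recall the standing hypotheses: $\rhobar$ is irreducible and modular, $\rhobar|_{G_{F(\zeta_l)}}$ is irreducible, $\rhobar|_{G_{F_v}}$ is reducible for all $v|l$, and (if $l=5$) the projective image of $\rhobar$ is neither $\PGL_2(\F_5)$ nor $\PSL_2(\F_5)$.

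The key step is to apply Proposition \ref{prop:adequacy for n=2} to the group $G=\rhobar(G_{F(\zeta_l)})$, which acts irreducibly on $\Flbar^2$ by hypothesis. The proposition tells us that exactly one of three cases holds. In the $l=5$ case, the projective image of $G$ would be $\PGL_2(\F_5)$ or $\PSL_2(\F_5)$; since the projective image of $\rhobar(G_{F(\zeta_5)})$ is a subgroup of the projective image of $\rhobar$, and since a group with projective image $\PGL_2(\F_5)$ or $\PSL_2(\F_5)$ cannot be a proper subgroup of the projective image of $\rhobar$ in the relevant sense (here one uses that $[\F_5(\zeta_5):\F_5]$ is trivial, i.e. $\zeta_5\notin F$ forces $F(\zeta_5)/F$ to have degree dividing $4$, and a short group-theoretic argument — compare Theorem 2.47(b) of \cite{MR1605752} — shows the projective image of $\rhobar$ itself would then have to be $\PGL_2(\F_5)$ or $\PSL_2(\F_5)$), this is excluded by our extra hypothesis. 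So for $l=5$ we are in the adequate case and apply Theorem \ref{main thm in adequate case}(2) directly. For $l\neq 3,5$, the first two bullets of Proposition \ref{prop:adequacy for n=2} are vacuous, so $G$ is adequate and again Theorem \ref{main thm in adequate case}(2) applies.

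The remaining case is $l=3$. Here Proposition \ref{prop:adequacy for n=2} says either the projective image of $\rhobar(G_{F(\zeta_3)})$ is conjugate to $\PSL_2(\F_3)$, or $G$ is adequate. In the latter sub-case, Theorem \ref{main thm in adequate case}(2) applies as before. In the former sub-case, the hypotheses of Theorem \ref{thm:main thm in PSL2F3 case} are exactly met — $F$ totally real, $\rhobar$ irreducible and modular, $\rhobar|_{G_{F_v}}$ reducible at all $v|3$, and the projective image of $\rhobar(G_{F(\zeta_3)})$ isomorphic to $\PSL_2(\F_3)$ — so that theorem gives the desired ordinary modular lift. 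In every case we have produced a modular lift $\rho:G_F\to\GL_2(\Qbar_l)$ ordinary at all places $v|l$, which completes the proof.

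The only genuinely non-formal point, and hence the main obstacle, is the reduction in the $l=5$ case: one must verify that our hypothesis on the projective image of $\rhobar$ (rather than of $\rhobar(G_{F(\zeta_5)})$) really does exclude the first two bullets of the proposition. This is handled by the same classical result on subgroups of $\PGL_2$ already invoked in the proof of Theorem \ref{thm:main thm in PSL2F3 case} (Theorem 2.47(b) of \cite{MR1605752}): since $G_{F(\zeta_5)}$ is a normal subgroup of $G_F$ of index dividing $4$, if its projective image were $\PGL_2(\F_5)$ or $\PSL_2(\F_5)$ then — these being their own normalizers up to small abelian quotients inside $\PGL_2(\Fbar_5)$ — the projective image of $\rhobar$ would be forced to lie in $\PGL_2(\F_5)$ and contain $\PSL_2(\F_5)$, hence be one of the two excluded groups, contradiction. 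Everything else is bookkeeping against the two theorems already established.
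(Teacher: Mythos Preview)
Your proposal is correct and follows essentially the same route as the paper's proof: apply Proposition~\ref{prop:adequacy for n=2} to $\rhobar(G_{F(\zeta_l)})$, dispose of the $l=5$ exceptional case by invoking Theorem~2.47(b) of \cite{MR1605752} to transfer the hypothesis on $\proj\rhobar(G_F)$ to $\proj\rhobar(G_{F(\zeta_5)})$, and then feed each remaining case into Theorem~\ref{main thm in adequate case}(2) or Theorem~\ref{thm:main thm in PSL2F3 case} as appropriate. One small slip: the parenthetical ``$[\F_5(\zeta_5):\F_5]$ is trivial'' is garbled (there is no primitive $5$th root of unity in characteristic $5$); the statement you actually need, and go on to use, is that $[F(\zeta_5):F]$ divides $4$, so $G_{F(\zeta_5)}$ is normal of index dividing $4$ in $G_F$.
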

\begin{proof}
  If $l=3$ and the projective image of $\rhobar(G_{F(\zeta_3)})$ is
  isomorphic to $\PSL_2(\F_3)$, then the result follows from Theorem
  \ref{thm:main thm in PSL2F3 case}. In all other cases we see from
  Proposition \ref{prop:adequacy for n=2} that
  $\rhobar(G_{F(\zeta_l)})$ is adequate (note that by (for example)
  Theorem 2.47(b) of \cite{MR1605752}, $\proj\rhobar(G_{F(\zeta_l)})$
  is isomorphic to $\PGL_2(\F_5)$ or $\PSL_2(\F_5)$ if and only if
  $\proj\rhobar(G_F)$ is isomorphic to $\PGL_2(\F_5)$ or
  $\PSL_2(\F_5)$) 
  and the result follows from Theorem \ref{main thm in adequate
    case}(2).
\end{proof}

\bibliographystyle{amsalpha} 
\bibliography{barnetlambgeegeraghty}

\providecommand{\bysame}{\leavevmode\hbox to3em{\hrulefill}\thinspace}
\providecommand{\MR}{\relax\ifhmode\unskip\space\fi MR }
\providecommand{\MRhref}[2]{%
  \href{http://www.ams.org/mathscinet-getitem?mr=#1}{#2}
}
\providecommand{\href}[2]{#2}
\begin{thebibliography}{BLGGT10}

\bibitem[BLGG11]{blgg11-serre-weights-for-U2}
Thomas Barnet-Lamb, Toby Gee, and David Geraghty, \emph{Serre weights for rank
  two unitary groups}, 2011.

\bibitem[BLGG12]{blggord}
Tom Barnet-Lamb, Toby Gee, and David Geraghty, \emph{Congruences between
  {H}ilbert modular forms: constructing ordinary lifts}, Duke Mathematical
  Journal (to appear) (2012).

\bibitem[BLGGT10]{BLGGT}
Tom Barnet-Lamb, Toby Gee, David Geraghty, and Richard Taylor, \emph{Potential
  automorphy and change of weight}, Preprint, 2010.

\bibitem[DDT97]{MR1605752}
Henri Darmon, Fred Diamond, and Richard Taylor, \emph{Fermat's last theorem},
  Elliptic curves, modular forms \& {F}ermat's last theorem ({H}ong {K}ong,
  1993), Int. Press, Cambridge, MA, 1997, pp.~2--140. \MR{MR1605752
  (99d:11067b)}

\bibitem[Gee11]{gee061}
Toby Gee, \emph{Automorphic lifts of prescribed types}, Math. Ann. \textbf{350}
  (2011), no.~1, 107--144. \MR{2785764 (2012c:11118)}

\bibitem[GK12]{geekisin}
Toby Gee and Mark Kisin, \emph{The {B}reuil--{M}\'ezard conjecture for
  potentially {B}arsotti--{T}ate representations}, 2012.

\bibitem[Kis09]{kis04}
Mark Kisin, \emph{Moduli of finite flat group schemes, and modularity}, Annals
  of Math.(2) \textbf{170} (2009), no.~3, 1085--1180. \MR{MR2600871}

\bibitem[Tho10]{jack}
Jack Thorne, \emph{On the automorphy of $l$-adic {G}alois representations with
  small residual image}, preprint available at
  \verb+http://www.math.harvard.edu/~thorne+, 2010.

\bibitem[Wil95]{wiles-fermat}
Andrew Wiles, \emph{Modular elliptic curves and {F}ermat's last theorem}, Ann.
  of Math. (2) \textbf{141} (1995), no.~3, 443--551. \MR{1333035 (96d:11071)}

\end{thebibliography}
\end{document}